\pdfoutput=1
\documentclass[oneside, a4paper, 12pt]{amsart}
\usepackage{setspace}
\usepackage[inner=1in,outer=1in,tmargin=1in,bmargin=1in]{geometry}
\usepackage{mathtools}
\usepackage{amssymb}
\usepackage[utf8]{inputenc}
\usepackage[T1]{fontenc}
\usepackage[english]{babel}
\usepackage{parskip}
\usepackage{tensor}
\usepackage{hyperref}
\usepackage{amsmath}
\usepackage{afterpage}
\usepackage{tikz}
\usetikzlibrary{3d,calc,arrows.meta,patterns,decorations.pathmorphing,decorations.pathreplacing}
\usepackage{ae,aecompl}
\usepackage{graphicx}
\usepackage[capposition=bottom]{floatrow}
\usepackage{empheq}
\usepackage{amsthm}
\usepackage{lipsum} 
\usepackage{cite}
\usepackage{cases}
\usepackage{hyperref}
\usepackage{pgfplots}
\usepackage{enumitem}
\usepackage{pdfpages}
\usepackage{slashed}
\usepackage[colorinlistoftodos]{todonotes}

\allowdisplaybreaks

\hypersetup{
	colorlinks,
	linkcolor={red!80!black},
	citecolor={blue!80!black},
	urlcolor={blue!80!black},
	linktocpage=page
}

\usetikzlibrary{decorations.pathmorphing,calc}


\providecommand{\abs}[1]{\lvert#1\rvert}

\newcommand{\munud}{_{\mu\nu}}

\newcommand{\pt}{\partial}

\newcommand{\be}{\begin{equation}}
	\newcommand{\ee}{\end{equation}}
\newcommand{\ba}{\begin{eqnarray}}
	\newcommand{\ea}{\end{eqnarray}}
\newcommand{\baa}{\begin{array}}
	\newcommand{\eaa}{\end{array}}

\newcommand{\ijd}{_{ij}}

\newcommand{\rmi}[1]{{\mbox{\scriptsize #1}}}  

\newcommand{\pminus}[1]{^{-#1}}

\makeatletter
\newcommand\incircbin
{%
  \mathpalette\@incircbin
}
\newcommand\@incircbin[2]
{%
  \mathbin%
  {%
    \ooalign{\hidewidth$#1#2$\hidewidth\crcr$#1\bigcirc$}%
  }%
}

\makeatother

\theoremstyle{definition}
\newtheorem{proposition}{Proposition}[section]

\theoremstyle{definition}
\newtheorem{theorem}{Theorem}[section]

\theoremstyle{definition}
\newtheorem{lemma}{Lemma}[section]

\theoremstyle{definition}

\theoremstyle{definition}

\theoremstyle{definition}
\newtheorem{question}{Question}[section]

\theoremstyle{remark}
\newtheorem{remark}{Remark}
\numberwithin{equation}{section}

\newcommand{\R}{\mathbb{R}}

\newcommand{\Cinf}{C^\infty}
\newcommand{\Cinfo}{C^\infty_0}

\def\p{\partial}
\DeclareMathOperator{\supp}{supp}

\begin{document}

\begin{abstract}
We show that two non-isometric, smooth, globally hyperbolic Lorentzian metrics can have the same hyperbolic Dirichlet-to-Neumann map on an infinite cylinder with timelike boundary. 
\end{abstract}

\title{Counterexamples to the Lorentzian Calderón problem}

\author{Lauri Oksanen, Miika Sarkkinen}

\address{Department of Mathematics and Statistics, University of Helsinki, PO BOX 68, 00014, Helsinki, Finland}
\email {lauri.oksanen@helsinki.fi}

\address{Department of Mathematics and Statistics, University of Jyväskylä, Jyväskylä, Finland
\newline Department of Mathematics and Statistics, University of Helsinki, PO BOX 68, 00014, Helsinki, Finland}
\email {miika.j.sarkkinen@jyu.fi, miika.sarkkinen@helsinki.fi}

\maketitle

\section{Introduction}

Positive results on the Lorentzian Calder\'on problem have been obtained recently \cite{stefanov2018, alexakis2022, alexakis2025, oksanen2024rigiditylorentziancalderonproblem, 2505.13676, 2505.15189, 2512.19601}. These results impose rather stringent geometric conditions. Partly motivating such conditions, we show that having data on an infinite cylinder with timelike boundary is not enough for the problem to be uniquely solvable. Our counterexamples are similar to earlier examples in the context of inverse scattering for moving obstacles \cite{Stefanov1991} in the sense that they are based on finite speed of propagation. We refer to \cite{liimatainen2022} for examples with data on only a part of a cylinder, and to \cite{eskin2019} for non-smooth, cloaking type examples. Cloaking has been extensively studied in the context of the classical (Riemannian) version of Calder\'on problem, see the review \cite{greenleaf2009a}. We mention also two studies of spacetime cloaking in physics literature \cite{McCall2010, Fridman2012}.

\subsection{Statement of results} Let $\mathcal M$ be a noncompact, connected smooth manifold of $1+n, n \geq 1$. Let $g$ be a time-orientable Lorentzian metric on $\mathcal M$.  We call $(\mathcal M,g)$ with such properties a \emph{spacetime}. We assume that the spacetime is globally hyperbolic, which means that \cite{Bernal_2007} 
\begin{enumerate}
    \item $\mathcal M$ contains no closed causal curves, and
    \item the causal diamonds $J_g^-(p) \cap J_g^+(q)$ are compact for any $p,q \in \mathcal M$.
\end{enumerate}
We note that for dimension $1+n \geq 3$, which is the dimension we are interested in on physical grounds, the first condition can be dropped from the definition \cite{hounnonkpe2019}. Global hyperbolicity can also be equivalently characterized by saying that $\mathcal M$ contains a Cauchy hypersurface, that is, a subset $\Sigma \subset \mathcal M$ that each inextendible timelike curve crosses exactly once \cite[Thm. 11]{geroch1970}. Moreover, results due to Bernal and Sánchez \cite{bernal2003, bernal2005} guarantee that a globally hyperbolic manifold can be foliated by smooth spacelike Cauchy hypersurfaces given by the level sets of a temporal function $\tau \in \Cinf(\mathcal M)$, that is, a function whose differential $d\tau$ is timelike. Such a function is called a \emph{Cauchy temporal function}. For any inextendible causal curve $\gamma: \R \to \mathcal M$ it holds that $\abs{\tau(\gamma(s))}\to \infty$ as $s \to \pm \infty$. Moreover, we have a global representation for the metric $g$ in terms of $\tau$:
\begin{theorem}\label{thm:bernal}
Let $(\mathcal M,g)$ be a globally hyperbolic spacetime with a Cauchy temporal function $\tau$. Then $(\mathcal M,g)$ is isometric to the smooth product manifold $\R\times \Sigma$ with the metric
\[
-\kappa d\tau^2 + g_\tau,
\]
where $\kappa \in \Cinf(\R\times\Sigma)$ is strictly positive, and $g_\tau$ is a Riemannian metric on $\Sigma$ that depends on $\tau$.
\end{theorem}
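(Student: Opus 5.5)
The approach is to flow along the normalized gradient of the Cauchy temporal function $\tau$, which is the standard Bernal--Sánchez argument. Since $d\tau$ is timelike, the vector field $\nabla \tau$ is timelike and nowhere vanishing. Define
\[
X = -\frac{\nabla \tau}{g(\nabla\tau,\nabla\tau)},
\]
so that $d\tau(X) = 1$. Fix the Cauchy hypersurface $\Sigma = \tau^{-1}(0)$; it is a smooth spacelike hypersurface because $d\tau\neq 0$ and $d\tau$ is timelike. Let $\phi_t$ be the (a priori local) flow of $X$, and define
\[
\Phi:\R\times\Sigma\to\mathcal M, \qquad \Phi(t,x)=\phi_t(x).
\]
Along integral curves of $X$ we have $\tau(\phi_t(x)) = t$.

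The main step, and the expected obstacle, is showing that $\Phi$ is defined on all of $\R\times\Sigma$ and is a diffeomorphism.

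\emph{Completeness.} An integral curve $\gamma$ of $X$ is timelike. Suppose its maximal interval of existence is $(a,b)$ with $b<\infty$. Then $\tau$ stays bounded on $\gamma$, since $\tau(\gamma(t))=t$. Extending the maximal integral curve yields an inextendible timelike curve on which $\tau$ is bounded, contradicting the property that $\abs{\tau(\gamma(s))}\to\infty$ along inextendible causal curves. To be careful here, one checks that a maximal integral curve with finite endpoint cannot have an endpoint in $\mathcal M$: if it did, the flow would extend. Hence the reparametrized curve is inextendible, and the contradiction stands.

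\emph{Bijectivity.} Injectivity follows from $\tau(\Phi(t,x))=t$ together with uniqueness of integral curves. For surjectivity, given $p\in\mathcal M$, follow the integral curve through $p$ for time $-\tau(p)$; it lands in $\Sigma$. A local diffeomorphism property follows because $d\Phi$ maps $\partial_t$ to $X$, which is transverse to $\phi_t(\Sigma)=\tau^{-1}(t)$, and maps $T\Sigma$ isomorphically onto $T\tau^{-1}(t)$. Thus $\Phi$ is a bijective local diffeomorphism, hence a diffeomorphism.

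Finally, compute the pulled-back metric. We have $\Phi^*\tau = t$, and $X$ is $g$-orthogonal to the level sets $\tau^{-1}(t)$, because $X\propto\nabla\tau$. Therefore the cross terms $dt\,dx^i$ vanish. Moreover,
\[
g(X,X) = \frac{1}{g(\nabla\tau,\nabla\tau)} = -\kappa, \qquad \kappa := -\frac{1}{g(\nabla\tau,\nabla\tau)} > 0,
\]
and $\kappa$ is smooth. The restriction of $g$ to the spacelike slice $\tau^{-1}(t)$, pulled back to $\Sigma$ by $\phi_t$, is a Riemannian metric $g_\tau$ depending smoothly on $\tau$. Hence
\[
\Phi^*g=-\kappa\,d\tau^2+g_\tau.
\]

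The only delicate points are the completeness argument and the identification of $\Sigma$ as a level set of $\tau$. Both reduce to the properties of Cauchy temporal functions quoted before the statement.
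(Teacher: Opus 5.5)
Your proof is correct and is the standard gradient-flow argument of Bernal--S\'anchez, which the paper does not reprove but simply cites; the hard part of their work, the existence of a Cauchy temporal function, is a hypothesis here. One sign slip: since $d\tau(\nabla\tau)=g(\nabla\tau,\nabla\tau)<0$, your $X$ gives $d\tau(X)=-1$, so take $X=\nabla\tau/g(\nabla\tau,\nabla\tau)$ instead (this is future-directed, has $d\tau(X)=1$, and still satisfies $g(X,X)=1/g(\nabla\tau,\nabla\tau)$), after which the rest goes through unchanged.
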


Globally hyperbolic spacetimes form a natural setting for inverse problems for the wave equation, as global hyperbolicity implies that the Cauchy problem for the wave equation is well-posed and solutions thereof have a finite propagation speed \cite[Thms. 3.2.11, 3.2.12]{bar2007}. The Lorentzian Calderón problem is an inverse boundary value problem for hyperbolic partial differential equations on Lorentzian manifolds, where one seeks to determine the underlying geometry of a spacetime from measurements at the boundary. This problem can be viewed as a Lorentzian analogue of the classical Calderón problem in the Riemannian setting, but exhibits fundamentally different features due to the causal structure of the wave equation. 

Let us now give a precise statement of the problem. We say that $M\subset \mathcal M$ is a {\em cylinder} if it is a submanifold of the same dimension with $\mathcal M$ and has a smooth timelike boundary $\pt M$. Consider the wave equation on such $M$
\begin{equation}
    \Box_g u = 0, \quad u\rvert_{\pt M} = \varphi, \quad u\rvert_{\tau\ll 0} = 0,
\end{equation}
where $\varphi \in \Cinfo(\pt M)$, and $\Box_g$ is the Laplace-Beltrami operator defined by
\[
\Box_g u = -\abs{g}^{-1/2} \pt_j (\abs{g}^{1/2}g^{jk}\pt_k u).
\]
Here the Einstein summation convention was implied. Then we define the hyperbolic Dirichlet-to-Neumann (DN) map
\[
\Lambda_g^\textnormal{Hyp}: \Cinfo(\pt M) \to \Cinf(\pt M), \quad \Lambda_g^\textnormal{Hyp}\varphi = \pt_\nu u\rvert_{\pt M},
\]
where $\nu$ is the unit inwards pointing normal vector to $\pt M$. In the Lorentzian Calderón problem, the data is given by the hyperbolic DN map, and the inverse problem is to recover the metric $g$, up to boundary-fixing diffeomorphisms.
In this paper, we give negative answers to the following uniqueness question for the Lorentzian Calderón problem.
\begin{question}\label{question:uniqueness}
Let $g$ and $g'$ be globally hyperblic Lorentzian metrics on $\mathcal M$, and let $M$ be a cylinder with respect to both the metrics. Suppose that $\Lambda_g^\textnormal{Hyp} = \Lambda_{g'}^\textnormal{Hyp}$.  Are $g$ and $g'$ isometric by a boundary-fixing diffeomorphism $F: M \to M$, $F\rvert_{\pt M} = \textnormal{id}$?
\end{question}

We will provide examples of smooth Lorentzian manifolds $(M,g)$ obtained by choosing a cylinder inside a globally hyperbolic spacetime $(\mathcal M,g)$, such that uniqueness in the Lorentzian Calderón problem fails therein. These include a cylinder in Minkowski spacetime given by a hyperboloid, Schwarzschild black hole and white hole spacetimes, and certain type of cosmological spacetimes. We show that these spacetimes can be locally perturbed in such a fashion that the DN map does not detect the resulting change in the geometry.

The counterexamples are based on the existence of a nonempty open set $U \subset M$ that is unreachable from $\p M$, meaning 
    \begin{align}\label{unreachable}
J_g^-(U) \cap \pt M = \varnothing 
\quad \text{or} \quad 
J_g^+(U) \cap \pt M = \varnothing.
    \end{align}
To state this common basis as a proposition, consider first the problem
\begin{equation}
    \Box_g u = f, \quad \supp u \subset J_g^+(\supp f),
\end{equation}
where $f \in \Cinfo(\mathcal{M}\backslash M)$. We may then define the \emph{source-to-solution map}
\[
L_g: \Cinfo(\mathcal{M}\backslash M) \to \Cinf(\mathcal M\backslash M), \quad f \mapsto u\rvert_{\mathcal M\backslash M}.
\] 
\begin{proposition}\label{prop:nonuniqueness}
Let $(\mathcal M, g)$ be a globally hyperbolic spacetime with a Cauchy temporal function $\tau$, and let $M \subset \mathcal M$ be a cylinder. 
Suppose there exists a nonempty open set $U \subset M$ satisfying \eqref{unreachable}. Then there exists a globally hyperbolic metric $g'$ on $\mathcal M$ such that $\tau$ is a Cauchy temporal function with respect to $g'$, $M$ is a cylinder with respect to $g'$, and that $L_g = L_{g'}$ but $g$ is not isometric to $g'$.
\end{proposition}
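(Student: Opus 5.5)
We perturb $g$ only inside a small compact set $K\subset U$ and then use finite speed of propagation to show that the exterior data cannot see the change. By symmetry (time reversal, i.e.\ replacing $\tau$ by $-\tau$) we may assume $J_g^+(U)\cap\pt M=\varnothing$; the case $J_g^-(U)\cap \pt M=\varnothing$ is handled the same way with the roles of past and future exchanged.

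\textbf{Choice of $g'$.} Pick a point $p\in U$ and a small closed coordinate ball $K\subset U$ around $p$. Write $g$ in the product form of Theorem~\ref{thm:bernal}, $g=-\kappa\,d\tau^2+g_\tau$. Set $g'=-\kappa'\,d\tau^2+g_\tau$, where $\kappa'=\kappa$ outside $K$ and $\kappa'\ge\kappa$ on $K$. Then $d\tau$ remains timelike for $g'$, and the level sets of $\tau$ remain spacelike.

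For causal curves, a vector that is $g'$-causal is also $g$-causal, because $g'(v,v)\le g(v,v)$. Hence the $g'$-causal relations are contained in the $g$-causal relations. It follows that $g'$ has no closed causal curves and that its causal diamonds are closed subsets of $g$-causal diamonds, hence compact. So $g'$ is globally hyperbolic. Every inextendible $g'$-timelike curve is $g$-timelike and therefore meets each level set of $\tau$ exactly once, so $\tau$ is a Cauchy temporal function for $g'$. Since $K\subset U\subset M$ lies in the interior of $M$, the boundary $\pt M$ and its timelike character are unchanged.

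\textbf{Non-isometry.} Choose $\kappa'$ so that some diffeomorphism invariant differs. One option is to make the total volume of $K$ differ while the metrics agree near $\pt K$. The cleanest option is to choose $\kappa'$ so that the scalar curvature of $g'$ at some point of $K$ exceeds $\sup_{\mathcal M}$ of the scalar curvature of $g$ over a suitable region. I expect this step to be delicate, because "not isometric" presumably means not isometric by any diffeomorphism, and $\mathcal M$ is noncompact, so a global sup of curvature may be infinite. The plan here is to use a bump of large amplitude and small support; then $R_{g'}$ takes values outside the range of $R_g$, provided $R_g$ is bounded. If it is not bounded, instead compare countable invariants such as the set of values of the curvature. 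The fallback is to interpret "isometric" as in Question~\ref{question:uniqueness}, namely via a boundary-fixing map, and argue with a local invariant along a curve issuing from $\pt M$.

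\textbf{Equality $L_g=L_{g'}$.} This is the main step. Let $f\in\Cinfo(\mathcal M\setminus M)$ and let $u$ solve $\Box_g u=f$. Let $u'$ be defined as the solution of the $g'$ problem. Define $w=u$ on $\mathcal M\setminus J_g^+(K)$.

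Since $J_g^+(U)\cap\pt M=\varnothing$ and $M$ has boundary $\pt M$, the set $J_g^+(K)$ is connected through causal curves to $K\subset M$ and never crosses $\pt M$. Therefore $J_g^+(K)\subset M$ and is disjoint from $\mathcal M\setminus M$.

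Now apply finite speed of propagation to $v=u'-u$. On $\mathcal M\setminus K$ we have $\Box_{g'}=\Box_g$, so $\Box_{g'}v=(\Box_g-\Box_{g'})u$, which is supported in $K$. This gives $\supp v\subset J_{g'}^+(K)\subset J_g^+(K)\subset M$. Hence $u'=u$ on $\mathcal M\setminus M$, and $L_g=L_{g'}$.

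The only care needed here is to justify uniqueness of the forward solution, which is supported in the causal future of its source for a globally hyperbolic metric, citing \cite{bar2007}. One must also verify that $J_g^+(K)$ stays inside $M$, which follows from the unreachability hypothesis together with connectedness of causal curves from $K$.
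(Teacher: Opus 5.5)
Your argument for $L_g=L_{g'}$ in the case $J_g^+(U)\cap\pt M=\varnothing$ is essentially the paper's Case 2. However, the proposal has a genuine gap in the non-isometry step and a sign error in the causality step.

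\textbf{Non-isometry.} You do not actually give an argument here, and none of the options you list works in general. The volume of $K$ is not an invariant, since an isometry need not map $K$ to $K$. Pushing $S_{g'}$ above $\sup S_g$ fails when $S_g$ is unbounded. The value set $S_g(M)$ can be all of $\R$, so comparing value sets gives nothing. The boundary-fixing fallback proves a weaker statement and is only gestured at. The missing idea is an invariant that detects \emph{local constancy}. By Morse--Sard, the set $\textnormal{Cr}(S_g)$ of critical values of the scalar curvature has measure zero, so you can choose $c\notin\textnormal{Cr}(S_g)$ and arrange $S_{g'}\equiv c$ on a nonempty open set. Then $c\in\textnormal{Cr}(S_{g'})\setminus\textnormal{Cr}(S_g)$. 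On the other hand, $F^*g=g'$ would force $S_{g'}=S_g\circ F$ and hence $\textnormal{Cr}(S_{g'})=\textnormal{Cr}(S_g)$. The paper gets $S_{g'}\equiv c$ by gluing in a de Sitter metric of suitable radius, $g'=(1-\chi)g+\chi h'$, with $\{\chi=1\}$ having nonempty interior. A lapse-only perturbation $-\kappa'd\tau^2+g_\tau$ gives no evident way to make $S_{g'}$ constant on an open set; that would require solving a PDE for $\kappa'$. So as it stands, your choice of $g'$ does not support a non-isometry argument.

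\textbf{Causality.} With $\kappa'\ge\kappa$ you have $g'(v,v)\le g(v,v)$. This says every $g$-causal vector is $g'$-causal, so the $g'$ light cones are \emph{wider}, not narrower. Hence ``every inextendible $g'$-timelike curve is $g$-timelike'' and the inclusions $J^\pm_{g'}\subset J^\pm_g$ are false, and your proofs of global hyperbolicity and of the Cauchy property of $\tau$ break down.

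Taking $0<\kappa'\le\kappa$ repairs this, and then your argument is cleaner than the paper's. But a perturbation that is not cone-comparable to $g$ needs something like the paper's argument; the de Sitter gluing actually needed for non-isometry is such a perturbation. That argument has three parts:
\begin{enumerate}
\item Lemma~\ref{lemma:causality} gives $J^+_{g'}(V)=J^+_g(V)$.
\item Lemma~\ref{lemma:tau_cutoff} gives compactness of $J^+_g(\overline V)\cap\{\tau\le\tau_0\}$.
\item A uniform speed bound forces a would-be inextendible curve to have an endpoint.
\end{enumerate}
The equality $J^+_{g'}(K)=J^+_g(K)$ that you need does hold without any cone comparison, by the last-exit argument of Lemma~\ref{lemma:causality}.

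Finally, time reversal is not a symmetry of $L_g$, which is defined through forward solutions. The case $J^-(U)\cap\pt M=\varnothing$ needs its own argument, and it is easy. In that case $J^+_g(\supp f)\cap U=\varnothing$, so $u$ vanishes on $U$ and also solves the $g'$ problem.
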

Moreover, to obtain a negative answer to Question~\ref{question:uniqueness}, we combine Proposition~\ref{prop:nonuniqueness} with
\begin{proposition}\label{lem_data_implication}
Let $g$ and $g'$ be globally hyperblic Lorentzian metrics on $\mathcal M$, and let $\tau$ be a Cauchy temporal function and $M$ a cylinder with respect to both the metrics. Suppose $L_g = L_{g'}$. Then $\Lambda_g^\textnormal{Hyp} = \Lambda_{g'}^\textnormal{Hyp}$.
\end{proposition}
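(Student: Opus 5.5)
The plan is to show that the source-to-solution map on the exterior $\mathcal M\setminus M$ determines the boundary data on $\pt M$ by a unique continuation and extension argument. We use that $g$ and $g'$ coincide near $\pt M$ and on $\mathcal M\setminus M$; this follows from $L_g=L_{g'}$, since the principal symbol of $\Box_g$ and the lower-order terms can be read off from $L_g$ applied to oscillatory sources in the open exterior set. More simply, we may apply $\Box_g$ to $L_g f$ and compare with $f$: we have $\Box_g L_g f = f = \Box_{g'} L_{g'} f$, and $L_gf=L_{g'}f$. Solutions of the form $L_g f$ are rich enough locally (for instance, their second derivatives at a point span all symmetric tensors) to give $\Box_g=\Box_{g'}$ as operators on $\mathcal M\setminus M$, and hence $g=g'$ there, and by continuity also on $\pt M$ together with all normal derivatives.

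Next, fix $\varphi\in\Cinfo(\pt M)$ and let $u$ solve the boundary problem for $g$ in $M$. Extend $u$ across $\pt M$ to a function $\tilde u$ on a collar in $\mathcal M\setminus M$, choosing the extension so that $\tilde u$ is $C^\infty$ across $\pt M$. Here I would instead work in reverse. Let $\chi$ be a cutoff supported in a thin exterior collar of $\pt M$ with $\chi=1$ near $\pt M$. Choose a smooth exterior function $w$ whose traces $(w,\pt_\nu w)|_{\pt M}=(\varphi,\Lambda_g^{\rm Hyp}\varphi)$ and whose remaining normal jets match those of $u$. Then the glued function $U=u$ in $M$, $U=\chi w$ outside, is smooth. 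Set $f:=\Box_g U$ on $\mathcal M\setminus M$. This $f$ vanishes in $M$, is smooth, and has compact support in $\mathcal M\setminus M$ away from $\pt M$. The last point requires choosing the jets of $w$ so that $\Box_g U$ vanishes to infinite order at $\pt M$; this is arranged by a Borel-type construction using the fact that $\pt M$ is timelike and hence noncharacteristic. Compact support in time follows from the vanishing of $u$ for $\tau\ll0$ together with the compact support of $\varphi$. Since $u$ need not vanish for large $\tau$, we cut off in time only in the exterior, so that $f$ is supported in $\{\tau\le T\}$.

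By uniqueness of the forward problem on the globally hyperbolic $(\mathcal M,g)$, we get $U=L_g$-solution, that is, $U$ is the solution with source $f$ restricted to $\tau\le T$. By causality this is enough to determine $U$ on $\pt M\cap\{\tau<T\}$, where $T$ is arbitrary. Now $L_{g'}f=L_gf$ on the exterior. Let $U'$ be the $g'$-solution with source $f$. It equals $U$ outside $M$, hence by smoothness has the same Cauchy data on $\pt M$. Inside $M$, $U'$ solves $\Box_{g'}U'=0$ with $U'|_{\pt M}=\varphi$ and vanishes for $\tau\ll0$ (since $\tau$ is Cauchy temporal for $g'$). Thus $U'|_M$ is the $g'$ boundary solution, and $\Lambda_{g'}^{\rm Hyp}\varphi=\pt_{\nu'}U'=\pt_\nu U=\Lambda_g^{\rm Hyp}\varphi$, using $g=g'$ at $\pt M$ so that the normals agree.

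The main obstacle is constructing $f$ supported strictly in the exterior, which reduces to a jet-matching problem at a noncharacteristic boundary. If the Borel construction is awkward, I would instead allow $f$ to have support touching $\pt M$ and use a density or limiting argument. Alternatively, I would let the exterior solution be any solution of the Cauchy problem for $\Box_g$ outside $M$ with data $(\varphi,\Lambda_g^{\rm Hyp}\varphi)$ on $\pt M$, solved locally in the collar, and then multiply it by a cutoff.
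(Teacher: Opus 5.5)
Your first step is sound and can be made rigorous in one line: every $\psi\in\Cinfo(\mathcal M\setminus M)$ is the forward solution with source $\Box_g\psi$, so $\psi=L_g(\Box_g\psi)$ and hence $\Box_{g'}\psi=\Box_g\psi$. This gives $g=g'$ on $\overline{\mathcal M\setminus M}$, and so the two metrics have the same unit normal on $\pt M$, which the paper's proof uses without comment.

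The gap is in the construction of the source. Once all normal jets of $w$ at $\pt M$ equal those of $u$, the glued $U$ is smooth and $\Box_gU$ vanishes on $M$. Infinite-order vanishing at $\pt M$ is then automatic rather than something to arrange, and it does not keep $\supp f$ away from $\pt M$. To get $f\in\Cinfo(\mathcal M\setminus M)$ you would need $\Box_g w=0$ on an exterior collar, i.e.\ $u$ would have to continue across $\pt M$ as a solution. That is a Cauchy problem with data $(\varphi,\Lambda_g^{\textnormal{Hyp}}\varphi)$ on the timelike hypersurface $\pt M$. For $n\ge2$ this problem is not $C^\infty$ well-posed, because $\Box_g$ is not hyperbolic with respect to the spacelike conormal of $\pt M$. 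The components of $u$ that decay into $M$ would have to grow exponentially outside, and for general smooth $\varphi$ no such $w$ exists, even locally. So your second fallback fails. As written, the proposal never produces a source in $\Cinfo(\mathcal M\setminus M)$, which is the only class on which the hypothesis $L_g=L_{g'}$ gives any information.

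The missing idea is an approximation step, and your first fallback can be carried out directly. Take any smooth extension of $u$ across $\pt M$ and multiply it by a collar cutoff. Put the time cutoff $\theta(\tau)$ on $f=\Box_gU$, not on $w$: cutting $w$ alone destroys smoothness of $U$ across $\pt M$, whereas cutting $f$ leaves $U$ unchanged on $\{\tau<T\}$ by causality. Then $f$ is smooth, compactly supported, and vanishes on $M$. Let $\rho_k$ be a cutoff to the exterior collar of width $1/k$ and set $f_k=(1-\rho_k)f$. By flatness of $f$ at $\pt M$, $f_k\in\Cinfo(\mathcal M\setminus M)$ and $f_k\to f$ in $C^\infty$, with supports in a fixed compact set. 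The forward solution operators $\Cinfo(\mathcal M)\to\Cinf(\mathcal M)$ for $g$ and $g'$ are continuous. So $L_gf_k=L_{g'}f_k$ passes to the limit and gives $U=U'$ on $\overline{\mathcal M\setminus M}$ with all derivatives, after which the rest of your argument goes through.

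This repaired route differs from the paper's. The paper proves in Lemma~\ref{lem_density} that the traces on $\Gamma$ of waves generated by exterior sources are dense in $L^2(\Gamma)$, by duality with a transmission problem, and then passes to the limit using continuity of the DN map. Your version realizes each smooth $\varphi$ as a $C^\infty$-limit of such traces, with the normal derivatives converging as well. It therefore needs neither the transmission problem nor continuity of $\Lambda_g^{\textnormal{Hyp}}$ on rough data. The cost is that it uses smoothness of $u$ up to $\pt M$ and the flat-approximation lemma.
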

We will prove Propositions~\ref{prop:nonuniqueness} and~\ref{lem_data_implication} in Section~\ref{section:proofs}.

\subsection{Some basic notions of the Lorentzian causality theory}\label{section:causality_theory}
Let $(\mathcal{M}, g)$ be a Lorentzian manifold with metric of signature $(-,+,\dots,+)$. A vector $v \in T_p\mathcal{M}$ is called \emph{timelike}, \emph{null} (or \emph{lightlike}), or \emph{spacelike} if $g(v,v) < 0$, $g(v,v) = 0$ with $v \neq 0$, or $g(v,v) \geq 0$ with equality only if $v=0$, respectively. A vector is called \emph{causal} if it is either timelike or null.

The manifold $(\mathcal{M}, g)$ is said to be \emph{time-oriented} if there exists a smooth timelike vector field $T$ on $\mathcal{M}$. In this case, a causal vector $v \in T_p\mathcal{M}$ is said to be \emph{future-pointing} if $g(v,T) < 0$, and \emph{past-pointing} if $g(v,T) > 0$.

A piecewise smooth curve $\gamma : I \to \mathcal{M}$ is said to be \emph{timelike}, \emph{null}, or \emph{causal} if its tangent vector $\dot{\gamma}(t)$ is timelike, null, or causal for all $t \in I$. It is \emph{future-directed} (resp.\ \emph{past-directed}) if $\dot{\gamma}(t)$ is future-pointing (resp.\ past-pointing) for all $t$.

We write $p \ll q$ for $p, q \in \mathcal M$ if there exists a future-directed timelike curve from $p$ to $q$, $p < q$ if there exists a future-directed causal curve from $p$ to $q$, and $p \le q$ if $p < q$ or $p = q$. The \emph{chronological} and \emph{causal future} of $p$ are
\[
I_g^+(p) := 
\{q \in \mathcal{M} \;:\; p \ll q\}, 
\quad 
J_g^+(p) := \{ q \in \mathcal{M} \;:\; p \le q \},
\]
and the \emph{chronological} $I^-(p)$ and \emph{causal past} $J_g^-(p)$ are defined analogously. Further, for a set $A \subset \mathcal M$, 
    \begin{align*}
I_g^\pm(A) = \bigcup_{p \in A} I_g^+(p),
\quad
J_g^\pm(A) = \bigcup_{p \in A} J_g^+(p).
    \end{align*}
One has $I^\pm(A) \subset J^\pm(A)$, and $I^\pm(A)$ are open sets for any $A \subset \mathcal M$, while $J^\pm(p)$ need not be closed in general \cite[Lem. 14.2]{oneill1983}. If $(\mathcal M, g)$ is globally hyperbolic, then $J^\pm(K)$ are closed for compact $K \subset \mathcal M$
\cite{hounnonkpe2019}.

A causal (or timelike) curve $\gamma : I \to \mathcal{M}$ is said to be \emph{future inextendible} if it has no future endpoint, i.e., there is no point $p \in \mathcal{M}$ such that $\gamma(t) \to p$ as $t \to \sup I$. Similarly, $\gamma$ is \emph{past inextendible} if it has no past endpoint, i.e., there is no point $p \in \mathcal{M}$ such that $\gamma(t) \to p$ as $t \to \inf I$. The curve $\gamma$ is called \emph{inextendible} if it is both future and past inextendible.
Equivalently, a causal curve is inextendible if it cannot be extended to a strictly larger interval, which in this formulation is expressed by the absence of endpoints in $\mathcal{M}$.

\subsection*{Acknowledgements}
LO and MS were supported by the European Research Council of the European Union, grant 101086697 (LoCal), and the Reseach Council of Finland, grants 347715, 353096 (Centre of Excellence of Inverse Modelling and Imaging) and 359182 (Flagship of Advanced Mathematics for Sensing Imaging and Modelling). MS was also supported by the Research Council of Finland project \#2100006347. Views and opinions expressed are those of the authors only and do not necessarily reflect those of the European Union or the other funding organizations.

\section{Counterexamples}
\subsection{Minkowski hyperboloid}
The simplest Lorentzian manifold is the Minkowski space of special relativity, that is, the manifold $\R^{1+n}$ equipped with the metric
\[
g = -dt^2 + \delta\ijd dx^i dx^j,
\]
where $(t,x)$ are the natural coordinates on $\R^{1+n}$. The Minkowski space is the paradigm case of a globally hyperbolic manifold where the level surfaces of $t$ give a smooth foliation by Cauchy hypersurfaces. Up to an affine reparametrization, a future-directed null geodesic of $g$ can be written in the form
\begin{equation}\label{eq_null_geodesic}
\gamma(s) = (t(s),x(s)) = (t_0 + s, x_0 + sv), \quad (t_0,x_0) \in \R^{1+n},
\end{equation}
where $v \in S^{n-1} \subset \R^n$. Now take a domain $M \subset \R^{1+n}$ defined by
\[
M = \{ (t,x) \in \R^{1+n}: \abs{x}^2 - a \abs x - t^2 \leq 0 \}, \quad  a > 0,
\]
and compute the differential of the function $f(t,x) = \abs{x}^2 - a \abs x - t^2$:
\[
df = - 2t dt + 2 \delta\ijd x^j dx^i - a \abs{x}\pminus1 \delta\ijd x^j dx^i. 
\]
Observe that the boundary $\pt M = \{f = 0\}$ is timelike; indeed, we have
\[
g(df,df)\rvert_{\pt M} =  4 (-t^2 + \abs{x}^2 - a \abs x)\rvert_{\pt M} + a^2 = a^2 > 0.
\]
Consider then the the diamond-shaped domain
\begin{equation}\label{eq:diamond}
    U = \{ (t,x) \in \R^{1+n}: \abs t + \abs x < a /2 \}.
\end{equation}

A null geodesic $\gamma$ given by \eqref{eq_null_geodesic} with $(t_0, x_0) \in U$ stays in the interior of $M$. Indeed, 
    \begin{align*}
\abs{x(s)} 
\le 
|x_0 - t_0 v| + |t_0 v + sv| 
= 
|x_0 - t_0 v| + |t(s)|
< a / 2 + |t(s)|,
    \end{align*}
and
    \begin{align*}
\abs{x(s)}^2 - a \abs{x(s)} - |t(s)|^2
< 
\abs{x(s)}^2 - a \abs{x(s)} - (a / 2 - \abs{x(s)})^2
= 
- a^2/4 < 0.
    \end{align*}
Consequently, $U \cap (J^-(\pt M) \cup J^+(\pt M)) = \varnothing $, and by Propositions~\ref{prop:nonuniqueness} and~\ref{lem_data_implication} we have a counterexample to Question~\ref{question:uniqueness}. 
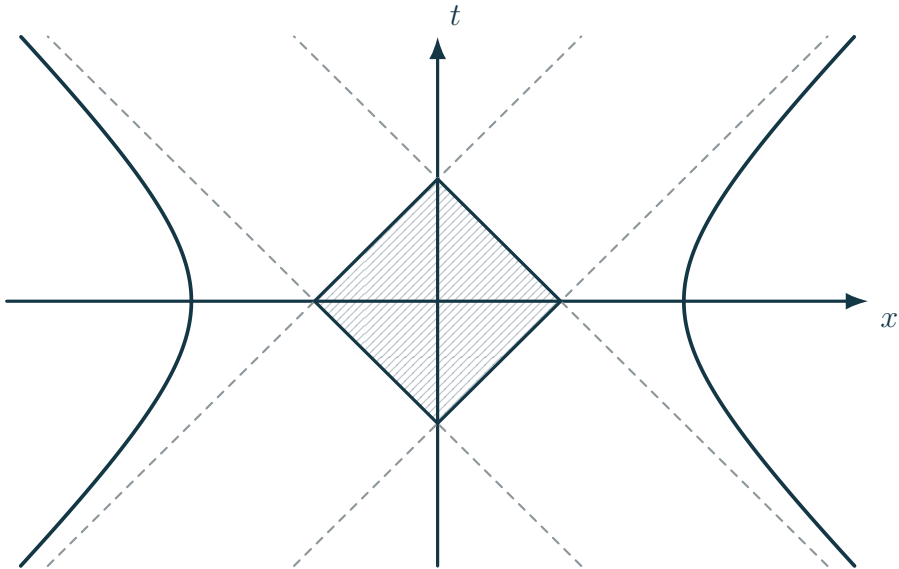
\begin{figure}
\begin{tikzpicture}[scale=1.35, line cap=round, line join=round]

\definecolor{ink}{RGB}{20,55,70}
\definecolor{soft}{RGB}{140,150,155}

\def\a{-2.4}      
\def\xmax{4.2}
\def\tmax{2.6}
\def\tmin{-2.6}

\pgfmathsetmacro{\d}{-0.5*\a}

\draw[ink, line width=1.2pt, -{Latex[length=3mm]}] (0,\tmin) -- (0,\tmax) node[above right] {$t$};
\draw[ink, line width=1.2pt, -{Latex[length=3mm]}] (-\xmax,0) -- (\xmax,0) node[below right] {$x$};

\draw[soft, dashed, line width=0.8pt]
  plot[domain=\tmin:\tmax] ({\x - \d}, {\x});

\draw[soft, dashed, line width=0.8pt]
  plot[domain=\tmin:\tmax] ({\d - \x}, {\x});

\draw[soft, dashed, line width=0.8pt]
  plot[domain=\tmin:\tmax] ({\x + \d}, {\x});

\draw[soft, dashed, line width=0.8pt]
  plot[domain=\tmin:\tmax] ({-\d - \x}, {\x});

\draw[ink, line width=1.2pt]
  (0,\d) -- (\d,0) -- (0,-\d) -- (-\d,0) -- cycle;
\fill[pattern=north east lines, pattern color=ink, opacity=0.55]
  (0,\d) -- (\d,0) -- (0,-\d) -- (-\d,0) -- cycle;

\draw[ink, line width=1.4pt]
  plot[smooth, samples=240, domain=\tmin:\tmax]
    ({(-\a + sqrt(\a*\a + 4*\x*\x))/2}, {\x});

\draw[ink, line width=1.4pt]
  plot[smooth, samples=240, domain=\tmin:\tmax]
    ({(\a - sqrt(\a*\a + 4*\x*\x))/2}, {\x});

\end{tikzpicture}
\caption{A cross-section of the Minkowski domain with a timelike hyperboloidal boundary. The shaded region is the domain $U$ in~\eqref{eq:diamond}. The hyperboloid asymptotes to the boundaries of the causal future and past of $U$ but never touches them.}
    \label{fig:minkowski}
\end{figure}

\subsection{Black and white hole spacetimes}
The unique spherically symmetric solution to the vacuum Einstein equation is the Schwarzschild metric, expressed in coordinates as
\begin{equation}\label{eq:schwarzschild}
g = -(1- r_S/r)dt^2 + (1-r_S/r)\pminus 1 dr^2 + r^2 g_{S^2}, \quad t\in \R, \,r \in (0,r_S)\cup (r_S, \infty),
\end{equation}
where $r_S>0$ is a constant called the Schwarzschild radius and $g_{S^2}$ is the unit two-sphere metric. The Schwarzschild spacetime has a curvature singularity at $r = 0$, whereas the singularity at $r= r_S$ is merely a coordinate artefact. Introducing the null coordinates
\begin{align}
    u &= -\left( \frac{r}{r_S} -1 \right)^{1/2} e^{(r-t)/2 r_S}, \\
    v &= \left( \frac{r}{r_S} -1 \right)^{1/2} e^{(r+t)/2 r_S},
\end{align}
the metric becomes
\begin{equation}
    g = - \frac{4 r_S^3}{r} e^{-r/r_S}dudv + r^2 g_{S^2}.
\end{equation}
It is convenient to further transform to a coordinate system where we have only timelike and spacelike coordinates, namely the Kruskal coordinates $T,R$ defined by $T = (v + u)/2$ and $R = (v-u)/2$.
The metric now reads
\[
g = \frac{4 r_S^3}{r} e^{-r/r_S}(-dT^2 + dR^2) + r^2 g_{S^2}.
\]
The coordinate $r$ is determined implicitly from
\[
T^2 - R^2 = \left( 1 - \frac{r}{r_S} \right)e^{r/r_S},
\]
and coordinates $T,R$ have ranges $R \in (-\infty, \infty), T^2 < R^2 + 1$. The Kruskal coordinates cover the whole maximally extended Schwarzschild spacetime, also known as the Kruskal spacetime $\mathcal K$ \cite[Cor. 13.37]{oneill1983}. The spacetime $\mathcal K$ contains both a black hole region $\mathcal B =\{r<r_S, T>0\}$ where no causal curve can escape (by \cite[Prop. 13.30]{oneill1983}), and a white hole region $\mathcal W =\{r<r_S, T<0\}$ that no causal curve can enter (due to time reversal symmetry of the metric). The Kruskal spacetime is globally hyperbolic. Indeed, the level set $\{T = 0 \}$ is a Cauchy hypersurface \cite[pp. 418-419]{oneill1983}.

Consider the submanifold $M=\{p \in \mathcal K: r(p) < r_0 \}$ for some $r_0 > r_S$. From the representation~\eqref{eq:schwarzschild} of the metric, it is clear that $g(dr,dr)\rvert_{r=r_0}>0$ so that the boundary $\pt M = \{r=r_0\}$ is timelike. It follows from \cite[Prop. 13.30]{oneill1983} that $J^+(\mathcal B)$ does not intersect with any subset of $\{r > r_S\}\subset \mathcal K$, in particular with $\pt M$. 
On the other hand, the white hole region $\mathcal W$ satisfies $J^-(\mathcal W) \cap \pt M = \varnothing$.
Hence, Proposition~\ref{prop:nonuniqueness} can be applied to $\mathcal B$ or $\mathcal W$ (or any $U$ contained in one of them) to obtain, together with Proposition \ref{lem_data_implication}, a negative answer to Question~\ref{question:uniqueness}.

\begin{figure}
\begin{tikzpicture}[scale=1.15, line cap=round, line join=round]

\definecolor{ink}{RGB}{20,55,70}
\definecolor{soft}{RGB}{140,150,155}

\tikzset{
  axis/.style={ink, line width=1.2pt, -{Latex[length=3mm]}},
  curve/.style={ink, line width=1.05pt, opacity=0.75},
  horizon/.style={soft, dashed, line width=0.9pt},
  sing/.style={ink, line width=1.25pt,
               decorate, decoration={zigzag, segment length=4.2pt, amplitude=1.0pt}},
}

\def\Rx{4.6}
\def\Ty{3.2}
\def\b{2.35}

\def\nInterior{2}
\def\nExterior{2}

\begin{scope}
  \clip (-\Rx,-\Ty) rectangle (\Rx,\Ty);

  \draw[horizon] (-\Rx,-\Rx) -- (\Rx,\Rx)
    node[pos=0.7, above left, rotate=45, ink] {$r=r_S$};
  \draw[horizon] (-\Rx,\Rx) -- (\Rx,-\Rx)
    node[pos=0.7, above right, rotate=-45, ink] {$r=r_S$};

  \draw[sing]
    plot[samples=250, domain=-\Rx:\Rx]
      (\x, {sqrt(\x*\x + \b*\b)});
  \draw[sing]
    plot[samples=250, domain=-\Rx:\Rx]
      (\x, {-sqrt(\x*\x + \b*\b)});

  \node[ink] at (0, {sqrt(\b*\b)+0.25}) {$r=0$};
  \node[ink] at (0, {-sqrt(\b*\b)-0.35}) {$r=0$};

  \foreach \k in {1,...,\nInterior}{
    \pgfmathsetmacro{\c}{(\k)*(\b*\b)/(\nInterior+1)}
    \draw[curve]
      plot[samples=220, domain=-\Rx:\Rx]
        (\x, {sqrt(\x*\x + \c)});
    \draw[curve]
      plot[samples=220, domain=-\Rx:\Rx]
        (\x, {-sqrt(\x*\x + \c)});
  }

  \foreach \k in {1,...,\nExterior}{
    \pgfmathsetmacro{\d}{2.5 + 1.8*\k}
    \draw[curve]
      plot[samples=220, domain=-\Ty:\Ty]
        ({sqrt(\x*\x + \d)}, \x);
    \draw[curve]
      plot[samples=220, domain=-\Ty:\Ty]
        ({-sqrt(\x*\x + \d)}, \x);
  }

  \node[ink, rotate=30]  at (0.8, 1.8) {$r< r_S$};
  \node[ink, rotate=-30] at (1.5,-2.25) {$r< r_S$};
  \node[ink, rotate=80]  at (3.,0.7) {$r>r_S$};
  \node[ink, rotate=80]  at (-3.,-0.7) {$r>r_S$};

\end{scope}

\draw[axis] (0,-\Ty) -- (0,\Ty);
\draw[axis] (-\Rx,0) -- (\Rx,0);

\node[above] at (0,\Ty) {$T$};
\node[right] at (\Rx,0) {$R$};

\end{tikzpicture}
\caption{The Kruskal diagram of the maximally extended Schwarzschild spacetime. The black hole is the region $\{r < r_S, T > 0\}$ and the white hole is the region $\{r < r_S, T < 0\}$. The wavy lines $r=0$ are the black and white hole singularities that are not part of the spacetime.}
\end{figure}
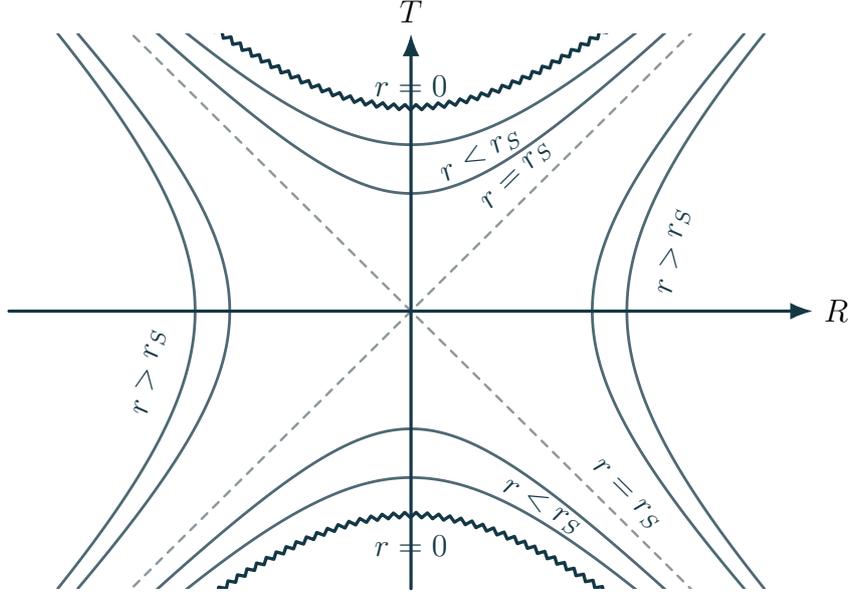

\subsection{Universe with a Big Bounce}
Consider a Friedman--Lema\^itre--Robertson--Walker (FLRW) metric of the form
\begin{equation}\label{eq:big_bounce}
    g = -dt^2 + \cosh^2(H t) g_\rmi{E}, \quad t \in (-\infty,\infty),
\end{equation}
where $g_E = \delta\ijd dx^i dx^j$ is the Euclidean metric on $\R^n$, and $H > 0$. The FLRW metrics, in general, are used in relativistic cosmology to model the evolution of the universe on large scales. The metric~\eqref{eq:big_bounce}, in particular, describes a `Big Bounce' cosmology: a universe contracting first to a minimum radius and then bouncing into an expanding phase. The spacetime $(\R^{1+n},g)$ is globally hyperbolic by \cite[Rmk. 12.27, Cor. 14.54]{oneill1983}. 

Now choose a cylinder $M = \{(t,x) \in \R^{1+n}: (\sum_{i=1}^n (x^i)^2)^{1/2} \leq R \}$ with $R > \pi/ H$. The boundary $\pt M$ is timelike. Consider then a maximal null geodesic $\gamma: [0,l] \to M$ with $l \in [0,\infty]$, $\gamma(0) \in \pt M$, and $\dot \gamma (0)$ pointing inside. To study light rays in $M$, it is convenient to introduce the conformal time coordinate
\begin{equation}
    \eta(t) = \int_{-\infty}^t \frac 1 {\cosh(H t')} dt' = \begin{cases}
        - H\pminus 1 \cot\pminus1(\sinh(H t)), &t < 0 \\
        H\pminus 1(\pi - \cot\pminus1(\sinh(H t))), &t \geq 0.
    \end{cases}
\end{equation}
The conformal time satisfies $\eta \in (0, \pi /H)$. On time slices we use the spherical coordinates centered around the initial point of a null geodesic. In the new coordinates, the metric reads
\begin{equation}
    g(\eta,x) = a(\eta)^2(-d\eta^2 + dr^2 + r^2 g_{S^{n-1}}),
\end{equation}
where $a(\eta) = \cosh(H t(\eta))$ and $g_{S^{n-1}}$ is the metric on the unit $(n-1)$-sphere. Since $g$ is conformally equivalent to the Minkowski metric $g_\rmi{M}$, null geodesics of $g$ are null pregeodesics of $g_\rmi{M}$, i.e., as point-sets they are straight line segments in $\R^{1+n}$. A null curve $\gamma(s) = (\eta(s), r(s),0,...,0)$ with $\eta(0)=\eta_0, r(0) = 0$ satisfies
\begin{equation}
    0 = a\pminus 2 g(\dot \gamma(s), \dot \gamma(s)) = -\dot \eta(s)^2 + \dot r(s)^2,  
\end{equation}
which implies that
\begin{equation}
    r(\eta) = \eta - \eta_0.
\end{equation}
The supremum for the coodinate distance $r$ traversed by a light ray is thus $\pi/H$. 
Consequently, $J^+ (U) \cap \pt M = \varnothing$, where $U$ is given by
\begin{equation}
    U = \{ (t,x) \in M: \big(\sum_i (x^i)^2\big)^{1/2} < \eta(t) + R - \pi/H \},
\end{equation}
that is, no signal sent from the central region reaches the boundary. On the other hand, $J^-(U') \cap \pt M = \varnothing$, where $U'$ is defined as 
\begin{equation}
    U' = \{ (t,x) \in M: \big(\sum_i (x^i)^2\big)^{1/2} < R -\eta(t) \}
\end{equation}
By Propositions~\ref{prop:nonuniqueness} and \ref{lem_data_implication} we again have a counterexample to Question~\ref{question:uniqueness}.

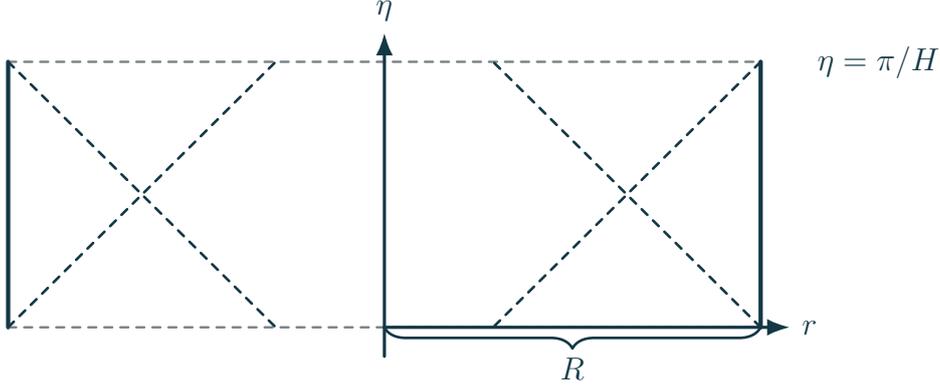
\begin{figure}
\begin{tikzpicture}[line cap=round, line join=round, scale=1.1]

\definecolor{ink}{RGB}{20,55,70}
\definecolor{soft}{RGB}{120,130,135}

\tikzset{
  wall/.style     ={ink, line width=1.6pt},
  dashedln/.style ={soft, dashed, line width=0.9pt},
  null/.style     ={ink, dashed, line width=1.0pt},
  axis/.style     ={ink, line width=1.2pt, -{Latex[length=3mm]}}
}

\def\L{9}
\def\H{3.2}
\def\Aext{0.35} 

\coordinate (BL) at (0,0);
\coordinate (BR) at (\L,0);
\coordinate (TL) at (0,\H);
\coordinate (TR) at (\L,\H);

\coordinate (O) at (\L/2,0);

\coordinate (B1) at (\H,0);
\coordinate (B2) at (\L-\H,0);

\coordinate (M1) at (\H,\H);
\coordinate (M2) at (\L-\H,\H);

\draw[wall] (BL) -- (TL);
\draw[wall] (BR) -- (TR);

\draw[dashedln] (TL) -- (TR);
\draw[dashedln] (BL) -- (BR);

\draw[null] (TL) -- (B1);
\draw[null] (TR) -- (B2);

\draw[null] (BL) -- (M1);
\draw[null] (BR) -- (M2);

\draw[decorate, decoration={brace, amplitude=8pt}, ink, line width=1.0pt]
  (\L,0) -- (\L/2,0)
  node[midway, below=7pt, ink] {$R$};

\draw[axis] ($(O)+(0,-\Aext)$) -- ($(O)+(0,\H+\Aext)$)
  node[above] {$\eta$};

\draw[axis] (O) -- ($(BR)+(\Aext,0)$)
  node[right] {$r$};

\node[ink, anchor=west] at (\L+0.55,\H) {$\eta=\pi/H$};

\end{tikzpicture}
\caption{The spacetime cylinder with a timelike boundary in the Big Bounce spacetime. The region $U$ is the upward-opening truncated cone, and $U'$ is the downward-opening one.}
\end{figure}





\section{Proofs}\label{section:proofs}

We now prove Proposition~\ref{prop:nonuniqueness}, starting with three lemmas. Then we turn to the proof of Proposition~\ref{lem_data_implication}.

\begin{lemma}\label{lem_open}
Let $(\mathcal M, g)$ be a time-oriented Lorentzian manifold and let $U \subset \mathcal M$ be open. Then $J_g^\pm(U) = I_g^\pm(U)$.
\end{lemma}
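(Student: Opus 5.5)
The inclusion $I_g^\pm(U) \subset J_g^\pm(U)$ holds trivially, since every timelike curve is causal. It therefore suffices to prove $J_g^+(U) \subset I_g^+(U)$; the past case follows by reversing the time orientation. The key input I plan to use is the standard fact from causality theory (e.g.\ \cite[Prop.~10.46]{oneill1983} and its corollaries): if $p \le q$ and $q \ll r$, or $p \ll q$ and $q \le r$, then $p \ll r$. Equivalently, $I^+(J^+(p)) \subset I^+(p)$ and $J^+(I^+(p)) \subset I^+(p)$.

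Let $q \in J_g^+(U)$, so that $p \le q$ for some $p \in U$. Since $U$ is open, I pick a point $p' \in U$ with $p' \ll p$. Concretely, I take a short past-directed timelike curve from $p$: in a normal coordinate neighbourhood of $p$, follow the geodesic $s \mapsto \exp_p(-sT_p)$, where $T$ is the time-orientation field. For small $s>0$ this curve stays inside $U$. Reversing it gives a future-directed timelike curve from $p' = \exp_p(-sT_p)$ to $p$, so $p' \ll p$.

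Now $p' \ll p \le q$, and the push-up fact gives $p' \ll q$, hence $q \in I_g^+(p') \subset I_g^+(U)$. If $q = p$, then directly $q \in I_g^+(p')$ as well. This proves $J_g^+(U) = I_g^+(U)$.

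The only nontrivial step is the push-up property $p \ll q \le r \Rightarrow p \ll r$. I would quote it from \cite{oneill1983}. If a self-contained argument is wanted, it is the local statement that a causal curve that is not a null pregeodesic can be deformed to a timelike one with the same endpoints. One applies this to the broken curve obtained by concatenating a timelike segment with a causal segment: at the junction the curve is not a null geodesic, so it can be deformed into a timelike curve. This step is the main obstacle if one insists on proving it from scratch, since it requires the convex-neighbourhood analysis; otherwise the proof is immediate.
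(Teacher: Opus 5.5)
Your proposal is correct and matches the paper's proof: use openness of $U$ to pick $\tilde p \in U$ with $\tilde p \ll p$, then apply the push-up property $\tilde p \ll p \le q \Rightarrow \tilde p \ll q$ from \cite{oneill1983} (the paper cites Cor.~14.1 there). Your explicit construction of $\tilde p$ via a short past-directed timelike geodesic, and your remark that $I_g^+(U) \subset J_g^+(U)$ is trivial, only spell out what the paper leaves implicit.
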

\begin{proof}
Let us prove the claim for the causal future; the past version is analogous. 
Let $q \in J_g^+(U)$. Then there is $p \in U$ with $p \le q$. As $U$ is open, there is $\tilde p \in U \cap I^-(p)$. Now $\tilde p \ll p \le q$, and \cite[Cor. 14.1]{oneill1983} yields $\tilde p \ll q$. Hence $q \in I_g^+(U)$.
\end{proof}

\begin{lemma}\label{lemma:causality}
Let $U\subset \mathcal M$ be open. Suppose that $g$ and $g'$ are time-oriented Lorentzian metrics on $\mathcal M$ and that the metrics and the time-orientations coincide in $\mathcal M\backslash U$. Then $J^\pm_g(U) = J^\pm_{g'}(U)$.
\end{lemma}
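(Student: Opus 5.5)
By Lemma~\ref{lem_open} it is enough to prove $I^\pm_g(U) = I^\pm_{g'}(U)$, since the same lemma applies to $g'$ and the open set $U$. We treat the future; the past is the same argument with the time orientation reversed. By symmetry in $g$ and $g'$, it suffices to show $I^+_g(U) \subset I^+_{g'}(U)$.

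Let $q \in I^+_g(U)$. Then there is a future-directed $g$-timelike piecewise smooth curve $\gamma:[0,1]\to\mathcal M$ with $\gamma(0) = p \in U$ and $\gamma(1) = q$. If $q \in U$ there is nothing to prove, because $q \in I^+_{g'}(\tilde q)$ for some $\tilde q \in U$ close to $q$ in the past, using the openness of $U$ with respect to $g'$. Otherwise, set
\[
s_0 = \sup\{ s \in [0,1] : \gamma(s) \in U\}.
\]
Since $U$ is open and $\gamma(1)\notin U$, we have $\gamma(s_0) \notin U$ and $\gamma((s_0,1]) \subset \mathcal M \setminus U$. Choose a sequence $s_k \uparrow s_0$ with $\gamma(s_k) \in U$.

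The key step is to exploit that the tail $\gamma|_{[s_0,1]}$ lies in the closed set $\mathcal M\setminus U$. At each point of $\mathcal M \setminus U$ the metrics $g$ and $g'$ agree, and so do the time orientations. Hence the tangent vectors $\dot\gamma(s)$ for $s\in[s_0,1]$ are $g'$-timelike and future-directed for $g'$, including one-sided derivatives at breakpoints and at $s_0$ by continuity. Therefore $\gamma|_{[s_0,1]}$ is a future-directed $g'$-timelike curve, and $q \in I^+_{g'}(\gamma(s_0))$.

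It remains to replace the endpoint $\gamma(s_0)\in\partial U$ by a point of $U$. Since $I^-_{g'}(q)$ is open and contains $\gamma(s_0)$, while $\gamma(s_k)\to\gamma(s_0)$, we get $\gamma(s_k) \in I^-_{g'}(q)\cap U$ for large $k$. Thus $q \in I^+_{g'}(U)$.

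The only delicate points are that timelikeness of a vector is a pointwise condition, so agreement of $g$ and $g'$ on the closed set $\mathcal M\setminus U$ already suffices (no neighbourhood is needed), and that $I^-_{g'}(q)$ is open. Openness of chronological pasts is standard, as recalled in Section~\ref{section:causality_theory}, so I expect no serious obstacle.
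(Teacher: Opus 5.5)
Your proof is correct and follows essentially the same route as the paper: take a $g$-timelike curve from $U$ to $q$ (via Lemma~\ref{lem_open}), cut it at the last exit time from $U$, note that the tail is $g'$-timelike and future-directed because it lies in $\mathcal M\setminus U$, and then step back into $U$. The differences are minor. You close the argument using openness of $I^-_{g'}(q)$, whereas the paper uses continuity to extend the $g'$-timelikeness of $\dot\gamma$ slightly before the exit time. You also treat the case $q\in U$ separately; the paper leaves this implicit, and there its supremum equals the endpoint, so its continuity step does not literally apply.
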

\begin{proof}
Let us prove the claim for the causal future; the past version is analogous. 
Let $q \in J^+_g(U)$. By Lemma \ref{lem_open} there is a curve $\gamma : [a, b] \to \mathcal M$ that is future-directed and timelike with respect to $g$, and $\gamma(a) \in U$ and $\gamma(b) = q$. Write 
    \begin{align*}
c = \sup \{ s \in (a,b): \gamma(s) \in U \}.
    \end{align*}
Then $\gamma\rvert_{[c,b]}$ is future-directed and timelike with respect to $g'$. 
By continuity, there is $0 < \delta \leq c-a$ such that $\gamma\rvert_{[c - \delta,b]}$ is future-directed and timelike with respect to $g'$. 
Due to the definition of $c$ as a supremum, and the openness of $U$, there is some $s \in [c - \delta, b]$ such that $\gamma(s) \in U$. Hence $q \in I^+_{g'}(U)$.
The opposite inclusion is analogous.
\end{proof}

\begin{lemma}\label{lemma:tau_cutoff}
    Let $(\mathcal M,g)$ be globally hyperbolic with a Cauchy temporal function $\tau$, and let $K \subset \mathcal M$ be compact. Then $J_g^+(K) \cap \{ \tau \leq \tau_0\}$ and $J_g^-(K) \cap \{ \tau \geq \tau_0\}$ are compact for any $\tau_0 \in \R$.
\end{lemma}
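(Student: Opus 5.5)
We prove the claim for $J_g^+(K) \cap \{\tau \le \tau_0\}$; the other set is handled by reversing the time orientation and replacing $\tau$ by $-\tau$. The idea is to trap this set inside a causal diamond $J_g^+(K)\cap J_g^-(K')$ with $K'$ compact. By the global hyperbolicity axioms such a diamond is compact, and then we only need closedness.

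\emph{Step 1: closedness.} $J_g^+(K)$ is closed because $K$ is compact and $(\mathcal M,g)$ is globally hyperbolic, as recalled in Section~\ref{section:causality_theory}. The set $\{\tau\le\tau_0\}$ is closed because $\tau$ is continuous. So their intersection is closed.

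\emph{Step 2: construction of $K'$.} Let $\tau_1=\max\{\tau_0,\max_K\tau\}+1$ and let $\Sigma_1=\{\tau=\tau_1\}$, which is a Cauchy hypersurface. Take any $q\in J_g^+(K)\cap\{\tau\le\tau_0\}$ and a future-directed causal curve from some $p\in K$ to $q$. Extend it to a future-inextendible causal curve. Along future-directed causal curves $\tau$ is strictly increasing, since $d\tau$ is timelike, and $\tau\to\infty$ along inextendible curves. So the extended curve meets $\Sigma_1$ at a point $r\in J_g^+(K)\cap\Sigma_1$ with $q\le r$. It follows that
\[
J_g^+(K)\cap\{\tau\le\tau_0\}\subset J_g^+(K)\cap J_g^-\bigl(J_g^+(K)\cap\Sigma_1\bigr).
\]

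\emph{Step 3: compactness of $K'=J_g^+(K)\cap\Sigma_1$.} This is the main obstacle. The standard fact is that $J^+(K)\cap\Sigma$ is compact for a compact $K$ and a Cauchy hypersurface $\Sigma$ in a globally hyperbolic spacetime. I would prove it with Theorem~\ref{thm:bernal}. Write $\mathcal M=\R\times\Sigma$ and project to $\Sigma$ along the integral curves of $\partial_\tau$. Cover $K$ by finitely many small open sets $I^-(x_i)$, and use compactness of the diamonds $J^+(K)\cap J^-(x_i)$. Then derive compactness of $K'$ from the limit curve theorem: a sequence $r_k\in K'$ with no convergent subsequence yields causal curves from $K$ whose limit curve is an inextendible causal curve starting in $K$. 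That curve must cross $\Sigma_1$ by the Cauchy property. This contradicts the escape of the $r_k$ to infinity in $\Sigma_1$, using that $\tau$ is bounded along the relevant segments. Alternatively, I would cite this fact directly, e.g.\ from O'Neill or B\"ar--Ginoux--Pf\"affle.

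\emph{Step 4: conclusion.} With $K'$ compact, $J_g^+(K)\cap J_g^-(K')$ is compact; the same diamond-compactness argument as for point sets extends to compact sets by covering them with finitely many chronological futures and pasts. Our set is a closed subset of this compact set, so it is compact.
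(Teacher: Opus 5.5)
Your argument is correct, granting the standard fact you invoke in Step 3, but it takes a different route from the paper.

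\textbf{The paper's route.} The paper argues with geodesics. It uses that causally related points in a globally hyperbolic spacetime are joined by a causal geodesic (O'Neill, Prop.~14.19) to write $J_g^+(K)=\exp(\mathcal C)$, where $\mathcal C$ is the closed cone of future causal vectors over $K$. It normalizes this cone with an auxiliary Riemannian metric to a compact set $\mathcal C_1$. It then uses $\partial_s\tau>0$ along causal geodesics and the implicit function theorem to show that the affine parameter $\lambda(p,v)$ at which the geodesic reaches $\{\tau=\tau_0\}$ is continuous. The set is then a closed subset of the image of a compact set under $(p,v,s)\mapsto\exp_p(sv)$.

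\textbf{Your route.} You stay entirely at the level of causal relations. You trap the set in $J_g^+(K)\cap J_g^-(K')$ and get compactness of such diamonds from the definition by covering $K$ and $K'$ with finitely many sets $I_g^\pm(y_i)$; that step is fine. This avoids geodesic connectedness and smooth dependence on initial data. It is also unaffected by incomplete causal geodesics. In the paper's version one should really use the compact set $\{(p,v,s): 0\le s\le\lambda(p,v)\}$ rather than $\mathcal C_1\times[0,L]$, since a geodesic from $\mathcal C_1$ need not be defined up to $L$.

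\textbf{The cost of your route.} Your Step 3 carries the whole content of the lemma. $J_g^+(K)\cap\Sigma_1$ is a closed subset of $J_g^+(K)\cap\{\tau\le\tau_1\}$, so its compactness follows from the lemma. Conversely, your Steps 2 and 4 derive the lemma from it. So the reduction is valid but does not make the problem easier.

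Citing that fact is legitimate. However, if you prove it via the limit curve theorem, apply that theorem directly to a sequence $q_k\in J_g^+(K)\cap\{\tau\le\tau_0\}$ that leaves every compact set. The limit is a future-inextendible causal curve starting in $K$ along which $\tau\le\tau_0$. This contradicts $\tau\to\infty$ along such curves, and Steps 2 and 4 become unnecessary. The finite cover of $K$ by sets $I^-(x_i)$ in your Step 3 plays no role in that argument and can be dropped.
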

\begin{proof}
    Let us prove the claim for the causal future; the past version is analogous. 
    Denote the closure of causal future pointing tangent vectors on $K$ as
    \[
    \mathcal C = \{ (p,v) \in T\mathcal M: p \in K,\, g(v,v)\leq 0,\, v\,\, \textnormal{future pointing or $v = 0$}\}.
    \]
    Let now $q \in J_g^+(K)$. Then there exists $p \in K$ with $p \leq q$, and by global hyperbolicity and \cite[Prop. 14.19]{oneill1983} there is a causal geodesic from $p$ to $q$. This implies that $\exp(\mathcal C) = J_g^+(K)$. Introducing an auxiliary Riemannian metric $\zeta$ on $\mathcal M$, consider the unit sphere bundle
    \[
    S K_\zeta = \{ (p,v) \in T\mathcal M: p \in K, \, \zeta(v,v) = 1 \},
    \]
    which is compact for compact $K$. Write $\mathcal C_1 = \mathcal C \cap SK_\zeta$, which also is compact as $\mathcal C$ is closed. Let $\mathcal D\subset \mathcal C_1 \times [0,\infty)$ be the domain such that for each $(p,v) \in \mathcal C_1$ the geodesic starting from $(p,v)$ and defined on $\mathcal D \cap (\{(p,v)\}\times [0,\infty))$ is future inextendible. Let then the map $\Phi: \mathcal D \to \mathcal M$ be defined by $\Phi(p,v;s) = \exp_p sv$, and let $\lambda: \mathcal C_1 \to [0,\infty)$ be the function such that
    \[
    \tau(\Phi(p,v;\lambda(p,v))) = \tau_0.
    \]
    Since $\tau$ is a Cauchy temporal function and $\alpha: s \mapsto \Phi(x,v;s)$ is a causal curve for each $(p,v) \in \mathcal C_1$, we have $\pt_s \tau(\alpha(s))>0$. From the implicit function theorem it then follows that $\lambda$ is in particular continuous. Thus, there exists the maximum $L = \max_{(p,v)\in \mathcal C_1}\lambda(p,v)$ and $J_g^+(K)\cap\{\tau\leq \tau_0\} \subset \Phi(\mathcal C_1 \times [0,L])$, which is compact. The claim then follows from the fact that $J_g^+(K)\cap\{\tau\leq \tau_0\}$ is closed. 
\end{proof}
\begin{proof}[Proof of Proposition~\ref{prop:nonuniqueness}]
    We express $g$ in the form provided by Theorem~\ref{thm:bernal}.
    Let $U' \subset U$ be a coordinate chart with coordinates $(\tau,x_1,...,x_n)$, and let $h$ be a constant curvature Lorentzian metric in $U'$ given in local coordinates by 
    \[
    h = \frac{R^2}{\tau^2}(-d\tau^2 + \delta\ijd dx^i dx^j),
    \]
    where $R > 0$.\footnote{The metric $h$ is a local representation of de Sitter metric, and $R$ is the curvature radius of de Sitter space, which has constant curvature $1/R^2$.}
    Then let $\chi \in \Cinfo(\mathcal M)$ be such that $\supp \chi \subset U'$, $0 \leq \chi \leq 1$, and that $\{\chi = 1\}$ has nonempty interior. Now define a metric $g'$ in $\mathcal M$ by
    \begin{equation}\label{eq:gprime}
    g' = (1-\chi) g + \chi h',
    \end{equation}
    where $h'$ is an arbitrary smooth extension of $h$ into $\mathcal M$. As the matrices of the coordinate components of the metrics $g$ and $h$ are of the form 
    \[
    (g\munud) =
    \begin{pmatrix}
       g_{\tau\tau} & 0 \\
       0 & g\ijd
    \end{pmatrix}, 
    \quad (h\munud) = 
    \begin{pmatrix}
        h_{\tau\tau} & 0 \\
        0 & h\ijd
    \end{pmatrix},
    \]
    where $g_{\tau\tau} < 0, h_{\tau\tau}< 0$, and $(g\ijd)$ and $(h\ijd)$ are positive-definite, the convex combination~\eqref{eq:gprime} is indeed a Lorentzian metric. 
    
Let us show that $\tau$ is a Cauchy temporal function on $(\mathcal M, g')$.  The differential of $\tau$ is timelike with respect to $g'$. Indeed
    \begin{align}\label{dtau_timelike}
g'(d\tau,d\tau) = 1/((1-\chi)g_{\tau\tau} + \chi h_{\tau\tau}) < 0.
    \end{align}
We let $\tau_0 \in \R$ and show that the level surface $\Sigma_0=\{\tau = \tau_0\}$ is a Cauchy hypersurface on $(\mathcal M,g')$. It follows from \eqref{dtau_timelike} that
\begin{equation}\label{eq:tau_grows}
    \pt_s\tau(\gamma(s)) = d\tau(\dot \gamma(s)) > 0
\end{equation}
for any future-directed timelike curve, which implies that such a curve intersects $\Sigma_0$ at most once. Moreover, we can reparametrize $\gamma$ in terms of $\tau$ and write 
    \begin{align}\label{gamma_param}
\gamma(\tau) = (\tau,x(\tau)).
    \end{align}

Suppose, for contradiction, that there is an inextendible timelike curve $\gamma: (a,b)\to \mathcal M$ that does not meet $\Sigma_0$ at all. We may assume that $\gamma$ is future-directed  and parametrized as in \eqref{gamma_param}. The past-directed case is treated in a symmetric fashion. We can further assume that $p = \gamma(c) \in \supp \chi$ for some $c\in(a,b)$; otherwise, $\gamma$ would be an inextendible timelike geodesic of $g$ that never meets $\Sigma_0$, which would contradict the global hyperbolicity of $(\mathcal M,g)$.
Since $\gamma$ is parametried in terms of $\tau$, we have $b \le \tau_0$.

Now choose an open neighborhood $V$ of $p$ with compact closure and such that $\supp \chi \subset V$. The graph $\gamma([c,b))$ is contained in $J^+_{g'}(p) \subset J^+_{g'}(V)$ and, due to Lemma~\ref{lemma:causality}, also in $J^+_g(V) \subset J^+_g(\overline{V})$. But as $\overline{V}$ is compact, it follows from Lemma~\ref{lemma:tau_cutoff} that 
    \begin{align*}
K := J^+_g(\overline{V}) \cap \{\tau \leq \tau_0\}
    \end{align*}
is compact.
We write $g^{(\tau)}$ and $h^{(\tau)}$ for the time-dependent Riemannian metrics on $\Sigma$ given in local coordinates by $g^{(\tau)} = g\ijd dx^i dx^j$ and $h^{(\tau)} = h\ijd dx^i dx^j$.
The convex combination $G^{(\tau)} = (1-\chi)g^{(\tau)} + \chi h^{(\tau)}$ is a time-dependent Riemannian metric on $\Sigma$ as well. Due to compactness, we can find a Riemannian metric $H$ on $\Sigma$ such that $G^{(\tau)}(v,v) \geq H(v,v)$ for all $(\tau,x) \in K, v \in T_x \Sigma$.
    
Since $\gamma$ is timelike, we have
    \begin{align*}
0 > g'(\dot\gamma(\tau),\dot\gamma(\tau)) 
= (1-\chi)g_{\tau\tau} + \chi h_{\tau\tau} + G^{(\tau)}(\dot x(\tau), \dot x(\tau)).
    \end{align*}
As the functions $g_{\tau\tau}$ and $h_{\tau\tau}$ are negative, we get for $\tau \in [c, b)$
    \begin{equation}\label{eq:gamma_est}
H(\dot x(\tau), \dot x(\tau)) \le G^{(\tau)}(\dot x(\tau), \dot x(\tau)) < (1-\chi)\abs{g_{\tau\tau}} + \chi \abs{h_{\tau\tau}}. 
    \end{equation}
Since $\gamma\rvert_{[c,b)}$ takes values in the compact set $K$, we get from~\eqref{eq:gamma_est} that 
\[
H(\dot x(\tau),\dot x(\tau))^{1/2} \leq C
\]
for some $C>0$. Due to compactness of $K$, the Lipschitz bound implies that there is $x_0 \in \Sigma$ such that $\gamma(\tau)=(\tau,x(\tau)) \to (b,x_0) \in K$ as $\tau \to b$, which is a contradiction since $\gamma$ was inextendible. Hence, all inextendible timelike curves intersect $\Sigma_0$ exactly once.

We have shown that $\tau$ is a Cauchy temporal function on $(\mathcal M, g')$, which then is globally hyperbolic. Observe that $U \cap \p M = \varnothing$ in view of \eqref{unreachable}. Thus $\p M$ is timelike with respect to $g'$.

    Next, we prove that $g$ and $g'$ are not isometric for a suitable choice of $R > 0$. Let $S_g$ be the scalar curvature associated with $g$. Denote by $\textnormal{Cr}(S_g)$ the set of critical values of $S_g: M \to \R$. By the Morse--Sard theorem, $\textnormal{Cr}(S_g)$ has measure zero in $\R$; in particular, $\textnormal{Cr}(S_g) \neq \R$. Then we may choose the constant curvature metric $h$ so that $S_h = c \in \R\backslash \textnormal{Cr}(S_g)$. But $S_h$ being constant, $c$ is a critical value of $S_{g'}$ since the interior of $\{\chi =1\}$ is nonempty and $S_{g'}\rvert_{\{\chi = 1\}}=S_h$. Thus, $\textnormal{Cr}(S_g) \neq \textnormal{Cr}(S_{g'})$ and, consequently, there is no diffeomorphism $F: M \to M$ such that $F^*S_g = S_{g'}$. Hence, $g'$ is not isometric to $g$.

    Finally, let us prove that the source-to-solution maps of $g$ and $g'$ coincide. First we note that, by Lemma \ref{lemma:causality}, we have $J^\pm_g(U) = J^\pm_{g'}(U)$. We then simply refer to these sets as $J^\pm(U)$. We consider the problem
    \[
    \Box_g u = f \quad \textnormal{in } \mathcal M, \quad \supp u \subset J_g^+(\supp f),
    \]
    where $f \in \Cinfo(\mathcal M \backslash M)$. By \cite[Thm. 3.1.1, 3.3.1]{bar2007}, there exists a unique solution $u$ to this problem. We consider the two cases in \eqref{unreachable} separately.
    
Case 1. Suppose $J^-(U) \cap \pt M = \varnothing$. By continuity, this implies that $J^-(U) \cap (\mathcal M \backslash M) = \varnothing$, which in turn gives $J^-(U) \cap \supp f = \varnothing$. This is equivalent with $U \cap J_g^+(\supp f) = U \cap J_{g'}^+(\supp f) = \varnothing$. Thus, if $u$ solves $\Box_g u = f$, then it also solves $\Box_{g'} u = f$, and vice versa, since $g' = g$ in $\mathcal M \backslash U$. Thus, the source-to-solution maps are identical.

Case 2. Suppose $J^+(U) \cap \pt M = \varnothing$. Let $u$ and $u'$ be the solutions to 
        \[
        \Box_g u = f, \quad \Box_{g'}u' = f,
        \]
        respectively, and write $w = u - u'$. Then $\Box_g w = \Box_g u - \Box_g u' =: \phi$, and since $g = g'$ outside $\supp \chi \subset U$, we have 
        $$\phi\rvert_{\mathcal M\backslash \supp \chi} = \Box_g u - \Box_{g'}u' = f - f = 0.$$
        In particular, $\phi$ is compactly supported, and the unique solution to
        \[
        \Box_g w = \phi, \quad \supp w \subset J_g^+(\supp \phi)
        \]
        is supported in a subset of $J_g^+(U)$. Hence, $w = u - u' = 0$ in $\mathcal M \backslash M$, so the source-to-solution maps agree.
\end{proof}
\begin{remark}
    By the method of proof of Proposition~\ref{prop:nonuniqueness}, we see that $g'$ is not necessarily a small perturbation of $g$, in the sense that the curvature of $g'$ can deviate from that of $g$ by a large amount.
\end{remark}

To prove Proposition~\ref{lem_data_implication}, we prove first a lemma.

\begin{lemma}\label{lem_density}
Let $(\mathcal M, g)$ be a globally hyperbolic spacetime with a Cauchy temporal function $\tau$, and let $M \subset \mathcal M$ be a cylinder. Let $T \in \R$ and write 
\begin{align}\label{def_M_j}
M_1 = M \cap \{\tau \le T\},
\quad
M_2 = (\mathcal M \setminus M) \cap \{\tau \le T\},
\quad 
\Gamma = \p M \cap \{\tau \le T\}.
    \end{align}
Consider the map $A_g f = v|_{\Gamma}$, $f \in C_0^\infty(M_2)$, where 
    \begin{align}
\begin{cases}\label{eq_wave}
\Box_g v = f & \text{in $\mathcal M$}
\\
v|_{\tau \ll 0} = 0.
\end{cases}
    \end{align}
Then $A_g$ has dense range in $L^2(\Gamma)$.
\end{lemma}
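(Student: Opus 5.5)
The proof goes by duality. Suppose $\psi \in L^2(\Gamma)$ is orthogonal to the range of $A_g$; I will show $\psi = 0$. Let $w$ be the solution of the backward (adjoint) problem
\begin{align*}
\Box_g w = \psi\, \delta_{\Gamma} \quad \text{in } \mathcal M, \qquad w|_{\tau > T} = 0,
\end{align*}
where $\psi\,\delta_\Gamma$ is the distribution $\phi \mapsto \int_\Gamma \psi \phi \, dS$. Since $\Gamma \subset \{\tau \le T\}$, this is a compactly supported-in-the-future source. Such a solution exists and is unique in a suitable Sobolev scale by global hyperbolicity, for instance via \cite{bar2007} applied with reversed time orientation. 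Lemma~\ref{lemma:tau_cutoff} guarantees that $\supp w \cap \{\tau \ge \tau_1\}$ is compact for every $\tau_1$, so pairings with $v$ make sense.

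The first step is the Green identity. The solution $v$ of \eqref{eq_wave} vanishes for $\tau \ll 0$, and $w$ vanishes for $\tau > T$. Hence the product has compact support, which lets me integrate by parts with no boundary terms and obtain
\begin{align*}
0 = (A_g f, \psi)_{L^2(\Gamma)} = \langle v, \Box_g w\rangle = \langle \Box_g v, w \rangle = \int_{M_2} f\, w \, dV_g
\end{align*}
for every $f \in C_0^\infty(M_2)$. To make this rigorous, I plan to approximate $\psi$ by smooth functions, or to use the regularity $w \in H^{1/2-\epsilon}_{loc}$, since the pairing with smooth $v$ is fine. The conclusion is $w = 0$ in the interior of $M_2$.

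The second step uses that $\Box_g w = 0$ on both sides of $\Gamma$ away from it, while $w$ has a jump in conormal derivative equal to $\psi$ (up to sign and normalization) and continuous traces across $\p M$. Since $w = 0$ in $M_2$, both its Dirichlet and Neumann data vanish from the outside. So in $M_1$ the function $w$ solves $\Box_g w = 0$ with Dirichlet trace $0$ on $\Gamma$, and it vanishes for $\tau > T$. By uniqueness for the backward initial–boundary value problem on the cylinder $M_1$, which has timelike boundary, $w = 0$ in $M_1$. The jump of the normal derivative then gives $\psi = 0$.

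The main obstacle is regularity and rigor. I need the transmission relations for a single-layer source with merely $L^2$ density, and well-posedness of the mixed problem in $M_1$ in a weak class. My first attempt is to avoid low-regularity issues by proving orthogonality against a dense smooth class: approximate $\psi$ in $L^2(\Gamma)$, or mollify along $\Gamma$, and use energy estimates for the mixed problem with timelike boundary. An alternative I would try if this fails is to argue directly with the solution of the backward Neumann-type problem in $M_1$ with data $\psi$, extended by the zero solution outside.
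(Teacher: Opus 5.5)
Your proposal is the paper's argument in a different packaging. The single-layer equation $\Box_g w=\psi\,\delta_\Gamma$ is the paper's transmission problem ($u_1=u_2$ and $\partial_\nu u_1-\partial_\nu u_2=h$ on $\Gamma$, with $w=\sum_j 1_{M_j}u_j$). Both proofs test against $f\in C_0^\infty(M_2)$ to kill $w$ outside $M$, then use uniqueness for the Dirichlet problem in $M_1$, and read off $\psi$ from the jump of $\partial_\nu w$.

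The one real difference is in your favour. You solve the dual problem \emph{backward} ($w=0$ for $\tau>T$), while the paper imposes $u_j|_{\tau\ll 0}=0$. With a forward dual solution, the identity $\sum_j(u_j,\Box_g v)_{M_j}=(h,v)_{L^2(\Gamma)}$ acquires terms on $\{\tau=T\}$ and is false in general. For instance, in $1+1$ Minkowski space with $M=\{x\ge 0\}$, place $\supp f$ in the chronological future of $\supp h$: then $(u_2,f)$ can be nonzero while $v=0$ on $\supp h$. Your backward choice, with the pairing taken over all of $\mathcal M$, is what makes the Green identity free of boundary terms.

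Two points need repair.

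First, the compactness you need is $\supp v\cap\supp w\subset J_g^+(\supp f)\cap\{\tau\le T\}$, which is compact by Lemma~\ref{lemma:tau_cutoff} applied to $K=\supp f$. Knowing that $v=0$ for $\tau\ll 0$ and $w=0$ for $\tau>T$ only confines the product to a time slab. Your claim that $\supp w\cap\{\tau\ge\tau_1\}$ is compact fails whenever the slices $\partial M\cap\{\tau=\mathrm{const}\}$ are noncompact, though you do not actually need it.

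Second, the regularity step. Approximating or mollifying $\psi$ destroys the hypothesis that $\psi$ is orthogonal to the range of $A_g$, so that route does not close. Your fallback (a backward Neumann problem in $M_1$ extended by $0$) also fails: $u_1|_\Gamma\neq 0$ leaves an extra term $\int_\Gamma u_1\,\partial_\nu v$. The right tool is regularity in the normal direction at a non-characteristic hypersurface. Since $\Gamma$ is timelike and $\Box_g w=0$ on either side, the restriction of $w$ to each side is smooth in the normal variable up to $\Gamma$, with distributional values in the tangential variables. This gives the traces and the jump relations $[w]=0$, $[\partial_\nu w]=\pm\psi$. Uniqueness in $M_1$ then follows by transposition against smooth solutions of the forward Dirichlet problem with zero boundary data. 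This is exactly what the paper gets from Williams' transmission theorem.
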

\begin{proof}
Let $h \in L^2(\Gamma)$. It is enough to show that 
if $(h, A_g f)_{L^2(\Gamma)} = 0$ for all $f \in C_0^\infty(M_2)$, then $h = 0$. Consider the transmission problem 
    \begin{align}
\begin{cases}
\Box_g u_j = 0 & \text{in $M_j$, $j=1,2$,}
\\
u_1|_{\Gamma} - u_2|_{\Gamma} = 0
\\
\p_\nu u_1|_{\Gamma} - \p_\nu u_2|_{\Gamma} = h
\\
u_j|_{\tau \ll 0} = 0 & $j=1,2$.
\end{cases}
    \end{align}
This problem has a unique solution as can be seen by iterating \cite[Thm. 5.4]{williams1992} in local coordinates. Applying this theorem with $s > 0$ large (and therefore $r < 0$ large in absolute value), we see that $\p_\nu^m u_j$, $m = 0, 1$, are well-defined on $\Gamma$ for $j=1,2$. The theorem also guarantees enough  regularity for $u_j$ in the normal direction, so that the integration by parts below is justified.
Define $w = \sum_{j=1,2} 1_{M_j} u_j$. Then
    \begin{align}
(w, f)_{\mathcal D' \times C_0^\infty(M_1 \cup M_2)}
&= 
\sum_{j=1,2} (u_j, \Box_g v)_{\mathcal D' \times C_0^\infty(M_j)}
= 
(h, v)_{L^2(\Gamma)}
\\&= 
(h, Af)_{L^2(\Gamma)} 
= 
0.
    \end{align}
As $f \in C_0^\infty(M_2)$ is arbitrary, we obtain $u_2 = 0$. This again implies that 
    \begin{align}
\begin{cases}
\Box_g u_1 = 0 & \text{in $M_1$}
\\
u_1|_{\Gamma} = 0
\\
u_1|_{\tau \ll 0} = 0.
\end{cases}
    \end{align}
Hence $u_1 = 0$ and we obtain $h = 0$.
\end{proof}

\begin{proof}[Proof of Proposition \ref{lem_data_implication}]
We will apply Lemma \ref{lem_density} to $g$ and $\tau$. Write $v_g^f$ for the solution of \eqref{eq_wave} and $v_{g'}^f$ for the solution of the same system, but with $g$ replaced by $g'$.
Choose a sequence $f_j \in C_0^\infty(M_2)$ so that, writing $h_j = A_g f_j$, we have $h_j \to h$ in $L^2(\Gamma)$. 

It follows from $L_g = L_{g'}$ that $v_g^{f_j} = v_{g'}^{f_j}$ in $\mathcal M \setminus M$. In particular, $h_j = v_g^{f_j}|_{\p M} = v_{g'}^{f_j}|_{\p M}$, $\p_\nu v_g^{f_j}|_{\p M} = \p_\nu v_{g'}^{f_j}|_{\p M}$, and for $G = g, g'$
    \begin{align}
\begin{cases}
\Box_G v_G^{f_j} = 0 & \text{in $M$}
\\
v_G^{f_j}|_{\p M} = h_j 
\\
v_G^{f_j}|_{\tau \ll 0} = 0.
\end{cases}
    \end{align}
As $\tau$ is a temporal function for both the metrics, we get on $\Gamma$
    \begin{align*}
\Lambda_g h 
= \lim_{j \to \infty} \Lambda_g h_j 
= \lim_{j \to \infty} \p_\nu v_g^{f_j}
= \lim_{j \to \infty} \p_\nu v_{g'}^{f_j}
= \lim_{j \to \infty} \Lambda_{g'} h_j
= \Lambda_{g'} h,
    \end{align*}
where the convergence is justified for instance by \cite[Prop. 4.3]{2512.19601}, together with the closed graph theorem.
As $T > 0$ in \eqref{def_M_j} can be chosen arbitrarily, we conclude that $\Lambda_g h = \Lambda_{g'} h$ holds on the whole boundary $\p M$.
\end{proof}

\bibliographystyle{alpha}
\bibliography{master}

\end{document}